\theoremstyle{plain} 
\newtheorem{theorem}{Theorem} 
\newtheorem{lemma}[theorem]{Lemma} 
\theoremstyle{remark} 
\newtheorem*{remark}{Remark} 
\newtheorem*{question}{Question}
\begin{document} 

\title[Linear functions and duality on the infinite polytorus]{Linear functions and duality \\
on the infinite polytorus} 
\date{\today} 

\author{Ole Fredrik Brevig} 
\address{Department of Mathematical Sciences, Norwegian University of Science and Technology (NTNU), NO-7491 Trondheim, Norway} 
\email{ole.brevig@math.ntnu.no}

\begin{abstract}
	We consider the following question: Are there exponents $2<p<q$ such that the Riesz projection is bounded from $L^q$ to $L^p$ on the infinite polytorus? We are unable to answer the question, but our counter-example improves a result of Marzo and Seip by demonstrating that the Riesz projection is unbounded from $L^\infty$ to $L^p$ if $p\geq 3.31138$. A similar result can be extracted for any $q>2$. Our approach is based on duality arguments and a detailed study of linear functions. Some related results are also presented.
\end{abstract}

\subjclass[2010]{Primary 42B05. Secondary 42B30, 46E30.}

\maketitle

\section{Introduction} \label{sec:intro} 
Let $\mathbb{T}^\infty = \mathbb{T}\times\mathbb{T}\times\cdots$ denote the countably infinite cartesian product of the torus $\mathbb{T} = \{z \in \mathbb{C} \,:\, |z|=1 \}$. We equip the $\mathbb{T}^\infty$ with its Haar measure $\mu_\infty$, which is equal to the infinite product of the normalized Lebesgue arc measure on $\mathbb{T}$ in each variable. Let $1\leq p \leq \infty$. Every $f$ in $L^p(\mathbb{T}^\infty)$ has a Fourier series expansion
\[f(z) = \sum_{\alpha \in \mathbb{Z}_0^{\infty}} c_\alpha z^{\alpha}\]
where the Fourier coefficients are defined in the standard way and $\alpha \in \mathbb{Z}_0^\infty$ means that the multi-index $\alpha$ contains only a finite number of non-zero components. The Riesz projection on $\mathbb{T}^\infty$ is defined by 
\begin{equation}\label{eq:rieszprojection} 
	Pf(z) = \sum_{\alpha \in \mathbb{N}_0^\infty} c_\alpha z^{\alpha}. 
\end{equation}
The initial motivation for the present paper is the following. 
\begin{question}
	What is the largest $p=p_\infty$ such that the Riesz projection \eqref{eq:rieszprojection} is bounded from $L^\infty(\mathbb{T}^\infty)$ to $L^p(\mathbb{T}^\infty)$? 
\end{question}

The Riesz projection is certainly a contraction on the Hilbert space $L^2(\mathbb{T}^\infty)$ and since $\|f\|_{L^2(\mathbb{T}^\infty)} \leq \|f\|_{L^\infty(\mathbb{T}^\infty)}$, we get that $p_\infty\geq2$. This question has previously been investigated by Marzo and Seip \cite{MS11} who demonstrated that $p_\infty \leq 3.67632$. We will obtain the following improvement. 
\begin{theorem}\label{thm:331} 
	$p_\infty\leq p = 3.31138\ldots$, where $p$ denotes the unique positive solution of the equation
	\[{\Gamma\left(1+\frac{p}{2}\right)}^\frac{1}{p} = \frac{2}{\sqrt{\pi}}.\]
\end{theorem}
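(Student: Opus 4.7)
The plan is to proceed via duality. Since the Riesz projection $P$ is self-adjoint on trigonometric polynomials, we have the identity $\|P\|_{L^\infty(\mathbb{T}^\infty) \to L^p(\mathbb{T}^\infty)} = \|P\|_{L^{p'}(\mathbb{T}^\infty) \to L^1(\mathbb{T}^\infty)}$, where $p'$ is the H\"older conjugate of $p$. It therefore suffices to produce a sequence $g_N$ of polynomial test functions for which $\|Pg_N\|_1 / \|g_N\|_{p'}$ is unbounded precisely when $\Gamma(1+p/2)^{1/p} > 2/\sqrt{\pi}$.

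The test functions I would try are built from linear polynomials. Writing $L_N(z) = \sum_{n=1}^N z_n$, I would consider
\[
g_N(z) = F\bigl(L_N(z)/\sqrt{N}\bigr) \cdot \overline{H\bigl(L_N(z)/\sqrt{N}\bigr)}
\]
for analytic polynomials $F$ and $H$ chosen to optimize the ratio. The central limit theorem implies that $L_N/\sqrt{N}$ converges in distribution to a standard complex Gaussian variable $Z$, and all moments converge; hence in the limit $N \to \infty$,
\[
\|g_N\|_{p'} \to \bigl\|F(Z)\overline{H(Z)}\bigr\|_{p'}, \qquad \|Pg_N\|_1 \to \bigl\|P\bigl[F(Z)\overline{H(Z)}\bigr]\bigr\|_1,
\]
with the limiting projection computed by the Wick/normal-ordering identity
\[
P\bigl(Z^a \overline{Z}^b\bigr) = \frac{a!}{(a-b)!}\, Z^{a-b} \quad \text{for } a \geq b, \qquad P\bigl(Z^a\overline{Z}^b\bigr) = 0 \quad \text{for } a < b,
\]
extended linearly. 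The two Gaussian quantities that govern the analysis are $\|Z\|_p = \Gamma(1+p/2)^{1/p}$ and $\|Z\|_1 = \sqrt{\pi}/2$, and I expect the variational optimum of the ratio, once computed, to be dictated precisely by their product $\|Z\|_p \cdot \|Z\|_1$: the moment this exceeds $1$, the ratio blows up, giving the stated threshold $\Gamma(1+p/2)^{1/p} = 2/\sqrt{\pi}$.

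The hard part will be the explicit extremization: the naive choice $F(w) = H(w) = w$ (which yields $g_N = |L_N|^2/N$, and via $P(Z\overline{Z}) = 1$ only a constant lower bound) gives just a weak estimate, and identifying the pair $(F,H)$ that saturates the critical relation requires a delicate calculation. One must also ensure that the central limit approximation is sufficiently uniform across the norms involved to transfer the Gaussian computation back to $\mathbb{T}^\infty$, and verify that no fundamentally different family of test functions---for instance, built from several independent linear forms---would produce a strictly smaller threshold.
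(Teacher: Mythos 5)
Your overall strategy---dualizing, using the linear forms $L_N(z)=z_1+\cdots+z_N$, and invoking the central limit theorem to produce the Gaussian constants $\|Z\|_p=\Gamma(1+p/2)^{1/p}$ and $\|Z\|_1=\sqrt{\pi}/2$---is the right one, and your guess that the threshold is governed by the product $\|Z\|_p\,\|Z\|_1$ is correct. But the proposal stops exactly where the proof has to start. You do not identify the extremizing pair $(F,H)$, you do not show that the supremum of $\|Pg_N\|_1/\|g_N\|_{p'}$ over your family actually reaches $\|Z\|_p\,\|Z\|_1$, and you do not justify the limit $\|Pg_N\|_1\to\|P[F(Z)\overline{H(Z)}]\|_1$: for finite $N$ the projection $P[L_N^a\overline{L_N}^b]$ is a symmetric $(a-b)$-homogeneous polynomial which for $a-b\geq 2$ is \emph{not} a function of $L_N$ alone (it picks up power-sum components such as $\sum_j z_j^2$), so one must verify that the discrepancy is asymptotically negligible. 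The ``delicate calculation'' you defer is the theorem; as written, the argument establishes nothing beyond the trivial bound, and your one worked example ($F=H=w$) gives a ratio below $1$.

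The paper closes this gap by working on the other side of the duality, which eliminates the extremization over test functions altogether. By the Hahn--Banach identity \eqref{eq:hahnbanach}, $\inf\{\|\psi\|_{L^\infty}:P\psi=\varphi_d\}=\|\varphi_d\|_{(H^1(\mathbb{T}^\infty))^\ast}$ with $\varphi_d=L_d/\sqrt{d}$, so only the dual norm of this single, highly symmetric function is needed. Lemma~\ref{lem:dualinverse} evaluates it exactly, $\|\varphi_d\|_{(H^r(\mathbb{T}^\infty))^\ast}=\|\varphi_d\|_{H^r(\mathbb{T}^\infty)}^{-1}$: the contractive projections onto the first $d$ variables and onto the $1$-homogeneous part (Lemma~\ref{lem:projection}) reduce the supremum to linear $f=\sum_{j\leq d}c_jz_j$, and averaging $f$ over cyclic shifts of its coefficient sequence---which preserves $\|f\|_{H^r}$ and produces $\lambda\varphi_d$ with $\lambda=\langle f,\varphi_d\rangle$---shows that $\varphi_d$ itself is extremal. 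The minimal-norm preimage then satisfies $\|P\psi_d\|_{L^p}/\|\psi_d\|_{L^\infty}=\|\varphi_d\|_{L^p}\|\varphi_d\|_{L^1}\to\Gamma(1+p/2)^{1/p}\cdot\sqrt{\pi}/2$ by \eqref{eq:clt}, and the tensoring dichotomy \eqref{eq:dichotomy} upgrades ``ratio exceeds $1$'' to unboundedness; this is Theorem~\ref{thm:general} with $q=\infty$, $r=1$. If you prefer your direction of the duality, the extremal test function is $g=|\varphi_d|^{p-2}\varphi_d$ (see Section~\ref{sec:minimal}), which has your form $F\overline{H}$ only for the non-polynomial choices $F(w)=w^{p/2}$ and $H(w)=w^{p/2-1}$---and evaluating $\|Pg\|_1$ from below still requires the symmetrization argument of Lemma~\ref{lem:dualinverse} or an equivalent substitute.
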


For $2\leq p \leq q \leq \infty$, let $\|P\|_{q,p}$ denote the norm of the Riesz projection from $L^q(\mathbb{T}^\infty)$ to $L^p(\mathbb{T}^\infty)$. In the case that the Riesz projection is unbounded, we use the convention $\|P\|_{q,p}=\infty$. As explained in \cite{MS11}, for each fixed $2\leq q\leq \infty$ there is a number $2 \leq p_q \leq q$, called the critical exponent, with the property that 
\begin{equation}\label{eq:dichotomy} 
	\|P\|_{p,q} = 
	\begin{cases}
		1 & \text{if } p \leq p_q, \\
		\infty & \text{if } p>p_q. 
	\end{cases}
\end{equation}
The dichotomy \eqref{eq:dichotomy} is a direct consequence of the fact that we are on the infinite polytorus. Let $f$ be a function in the unit ball of $L^q(\mathbb{T}^\infty)$ such that $\|Pf\|_{L^p(\mathbb{T}^\infty)}>1$. Consider the function
\[f_2(z) = f(z_1,z_3,z_5,\ldots)\cdot f(z_2,z_4,z_6,\ldots)\]
which is also in the unit ball of $L^q(\mathbb{T}^\infty)$. The Riesz projection \eqref{eq:rieszprojection} acts independently on the variables, so we find that
\[Pf_2(z) = Pf(z_1,z_3,z_5,\ldots) \cdot Pf(z_2,z_4,z_6,\ldots)\]
which implies that $\|Pf_2\|_{L^p(\mathbb{T}^\infty)} = \|Pf\|_{L^p(\mathbb{T}^\infty)}^2 > \|P f\|_{L^p(\mathbb{T}^\infty)}$. This procedure can be repeated and so we obtain \eqref{eq:dichotomy}. The example from \cite{MS11} producing $p_\infty \leq 3.67632$ is a function of only two variables.

The present paper is inspired by \cite{BP15}, where linear functions are used as building blocks in an similar way to what was just described to construct a counter-example related to Nehari's theorem for Hankel forms on $\mathbb{T}^\infty$. The example from \cite{BP15} improves on an earlier example from \cite{OCS12} by replacing a function of two variables by a linear function in an infinite number of variables.

Our approach differs from that of \cite{MS11} (and \cite{BOCSZ}) in that we do not attempt to directly construct a counter-example, but instead use duality arguments to infer its existence. This approach leads us to consider the Hardy spaces $H^p(\mathbb{T}^\infty)$, which are the subspaces of $L^p(\mathbb{T}^\infty)$ consisting of elements such that $Pf = f$. A standard argument involving the Hahn--Banach theorem (see e.g. \cite[Sec.~7.2]{Duren}) yields that 
\begin{equation}\label{eq:hahnbanach} 
	\inf_{P \psi = \varphi} \|\psi\|_{L^q(\mathbb{T}^\infty)} = \|\varphi\|_{(H^r(\mathbb{T}^\infty))^\ast} = \sup_{f \in H^r(\mathbb{T}^\infty)} \frac{|\langle f, \varphi \rangle_{L^2(\mathbb{T}^\infty)}|}{\|f\|_{H^r(\mathbb{T}^\infty)}} 
\end{equation}
for $1\leq r < \infty$ and $q^{-1}+r^{-1}=1$. We will choose $\varphi$ and try to find the optimal $f$ in $H^r(\mathbb{T}^\infty)$ attaining the supremum. This will ensure the existence of $\psi$ in $L^q(\mathbb{T}^\infty)$ attaining the infimum, which be our counter-example through \eqref{eq:dichotomy}. 

We shall see in Section~\ref{sec:minimal} that if we know the optimal $f$ in the supremum on the right hand side of \eqref{eq:hahnbanach}, we can use H\"older's inequality to construct the element $\psi$ in $L^q(\mathbb{T}^\infty)$ of minimal norm such that $P\psi=\varphi$, thereby attaining the infimum on the left hand side of \eqref{eq:hahnbanach}.

As in \cite{BP15} we will primarily be working with linear functions, which are of the form 
\begin{equation}\label{eq:linearfunction} 
	f(z) = \sum_{j=1}^\infty c_j z_j. 
\end{equation}
Clearly, $\|f\|_{H^2(\mathbb{T}^\infty)}^2 = \sum_{j\geq1}|c_j|^2$ and we easily check that $\|f\|_{H^\infty(\mathbb{T}^\infty)} = \sum_{j\geq1} |c_j|$. For $1\leq p< \infty$, optimal norm estimates are given by Khintchine's inequality. Define
\[a_p = \min\left(1,\,{\Gamma\left(1+\frac{p}{2}\right)}^\frac{1}{p}\right) \qquad \text{and} \qquad b_p = \max\left(1,\,{\Gamma\left(1+\frac{p}{2}\right)}^\frac{1}{p}\right).\]
If $f$ is a linear function \eqref{eq:linearfunction} and $1\leq p < \infty$, then we restate a result from \cite{KK01} as 
\begin{equation}\label{eq:khintchine} 
	a_p \|f\|_{H^2(\mathbb{T}^\infty)} \leq \|f\|_{H^p(\mathbb{T}^\infty)} \leq b_p \|f\|_{H^2(\mathbb{T}^\infty)} 
\end{equation}
and the constants in \eqref{eq:khintchine} are optimal. We shall obtain the following companion inequality for dual norms, which might be of independent interest. 
\begin{theorem}\label{thm:khintchinedual} 
	Let $1\leq p < \infty$. If $f$ is a linear function \eqref{eq:linearfunction}, then 
	\begin{equation}\label{eq:khintchinedual} 
		b_p^{-1} \|f\|_{H^2(\mathbb{T}^\infty)} \leq \|f\|_{(H^p(\mathbb{T}^\infty))^\ast} \leq a_p^{-1} \|f\|_{H^2(\mathbb{T}^\infty)}. 
	\end{equation}
	The constants are optimal. 
\end{theorem}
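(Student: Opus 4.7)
My plan is to prove the two inequalities in \eqref{eq:khintchinedual} separately via the dual characterisation in \eqref{eq:hahnbanach}, exploiting the observation that when $f$ is linear the $L^2$-pairing $\langle g, f\rangle$ depends only on the linear part of $g$, which I denote by $L(g)(z) = \sum_j \hat g(e_j)z_j$.

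The lower bound is immediate. Taking $g=f$ as the test function in the supremum in \eqref{eq:hahnbanach} and invoking the right-hand side of \eqref{eq:khintchine}, I obtain
\[\|f\|_{(H^p(\mathbb{T}^\infty))^\ast} \geq \frac{|\langle f,f\rangle|}{\|f\|_{H^p(\mathbb{T}^\infty)}} = \frac{\|f\|_{H^2(\mathbb{T}^\infty)}^2}{\|f\|_{H^p(\mathbb{T}^\infty)}} \geq \frac{\|f\|_{H^2(\mathbb{T}^\infty)}}{b_p}.\]
For the upper bound the crucial point is that $L$ is a contraction on $H^p(\mathbb{T}^\infty)$, which I would establish from the one-parameter averaging formula
\[L(g)(z) = \int_{\mathbb{T}} g(\lambda z_1, \lambda z_2, \ldots)\,\bar\lambda\,\frac{d\lambda}{2\pi};\]
this expresses $L$ as a convex combination of $L^p$-isometric rotations, so Minkowski's integral inequality yields $\|L(g)\|_{H^p}\leq \|g\|_{H^p}$. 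Combining this with Cauchy--Schwarz on the $\ell^2$-pairing of linear coefficients and the left-hand side of \eqref{eq:khintchine} applied to the linear polynomial $L(g)$ gives
\[|\langle g,f\rangle| = |\langle L(g),f\rangle| \leq \|L(g)\|_{H^2}\|f\|_{H^2} \leq a_p^{-1}\|L(g)\|_{H^p}\|f\|_{H^2} \leq a_p^{-1}\|g\|_{H^p}\|f\|_{H^2},\]
and taking the supremum over $g$ delivers the stated upper bound.

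To verify optimality of the constants I would use the Khintchine-extremal sequence $f_N(z) = N^{-1/2}(z_1+\cdots+z_N)$, for which the central limit theorem yields $\|f_N\|_{H^p}\to \Gamma(1+p/2)^{1/p}$ as $N\to\infty$. When $1\leq p\leq 2$ this limit equals $a_p$, so the lower-bound argument applied to $g=f_N$ gives $\|f_N\|_{(H^p(\mathbb{T}^\infty))^\ast}/\|f_N\|_{H^2(\mathbb{T}^\infty)}\to a_p^{-1}$, proving sharpness of the upper constant in this range. The single-variable example $f=z_1$ gives $\|z_1\|_{(H^p(\mathbb{T}^\infty))^\ast}=1$ (by the contractivity of $L$), which saturates $b_p^{-1}\|f\|_{H^2}$ when $p\leq 2$ and $a_p^{-1}\|f\|_{H^2}$ when $p\geq 2$. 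The main obstacle I expect is the sharpness of the lower bound $b_p^{-1}$ for $p>2$: here I would work in the infimum formulation of \eqref{eq:hahnbanach} and, in the spirit of Section~\ref{sec:minimal}, construct an extremal $\psi_N\in L^{p'}(\mathbb{T}^\infty)$ of the form $\psi_N = f_N + (\text{antianalytic correction})$ whose $L^{p'}$-norm approaches $b_p^{-1}$. By the central limit theorem this ought to reduce to an extremal problem for the complex Gaussian distribution, and solving that Gaussian problem is the genuinely non-trivial step.
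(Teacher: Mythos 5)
Your two-sided inequality is proved correctly and by essentially the paper's own argument: your operator $L$ is the $k=1$ case of the paper's Lemma~\ref{lem:projection} (contractivity of the projection onto the $k$-homogeneous part), and the rest is Cauchy--Schwarz plus Khintchine exactly as in the paper. The gap is in the optimality claim, and you have correctly diagnosed where it is: you establish sharpness of the constant $1$ (via $f=z_1$) and of $a_p^{-1}=\Gamma(1+p/2)^{-1/p}$ for $1\leq p\leq 2$ (via $f_N$ and the lower-bound computation), but the sharpness of $b_p^{-1}=\Gamma(1+p/2)^{-1/p}$ for $p>2$ is left as an announced programme rather than a proof. This is not a peripheral case --- it is precisely the range the paper needs for Theorems~\ref{thm:331} and~\ref{thm:general} --- and what it requires is an \emph{upper} bound $\|\varphi_d\|_{(H^p(\mathbb{T}^\infty))^\ast}\leq\|\varphi_d\|_{H^p(\mathbb{T}^\infty)}^{-1}$ for $\varphi_d(z)=(z_1+\cdots+z_d)/\sqrt{d}$, which neither Cauchy--Schwarz nor the lower-bound trick can give.

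The paper closes this gap with Lemma~\ref{lem:dualinverse}, which stays entirely on the supremum side of \eqref{eq:hahnbanach} and avoids the Gaussian extremal problem you anticipate. The argument: reduce to test functions $f(z)=\sum_{j=1}^d c_jz_j$ with $c_j\geq 0$ (using $A_d$, Lemma~\ref{lem:projection}, and rotations in each variable); let $f_1,\ldots,f_d$ be the $d$ cyclic shifts of the coefficient vector, each of which has the same $H^p$ norm by symmetry; observe that $\tfrac{1}{d}\sum_k f_k=\lambda\varphi_d$ with $\lambda=\langle f,\varphi_d\rangle$; the triangle inequality then gives $\lambda\|\varphi_d\|_{H^p(\mathbb{T}^\infty)}\leq\|f\|_{H^p(\mathbb{T}^\infty)}$, i.e.\ $|\langle f,\varphi_d\rangle|/\|f\|_{H^p(\mathbb{T}^\infty)}\leq\|\varphi_d\|_{H^p(\mathbb{T}^\infty)}^{-1}$ for every test function. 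Combined with the central limit theorem \eqref{eq:clt} this yields the sharpness of $b_p^{-1}$ for all $p$. Your fallback plan --- exhibiting $\psi_N$ with $P\psi_N=\varphi_d$ and small $L^{p'}$ norm --- can in fact be carried out at finite $d$ with $\psi_N$ proportional to $|\varphi_d|^{p-2}\varphi_d$ (this is the content of Section~\ref{sec:minimal}, where one checks $P\psi=\varphi_d$ using $1$-homogeneity and symmetry), so no limiting Gaussian problem ever needs to be solved; but note that the paper derives that extremal \emph{from} Lemma~\ref{lem:dualinverse} rather than the other way around, and as your proposal stands the construction is not executed, so the optimality statement remains unproved for $p>2$.
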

\begin{remark}
	In the case $p=\infty$, it is easy to deduce by similar considerations (Lemma~\ref{lem:projection}) that $\|f\|_{(H^\infty(\mathbb{T}^\infty))^\ast} = \sup_{j\geq1} |c_j|$ if $f$ is a linear function \eqref{eq:linearfunction}. 
\end{remark}

Optimality of the constants containing the Gamma function in \eqref{eq:khintchine} and \eqref{eq:khintchinedual} both arise from the function
\[f(z) = \frac{z_1+z_2+\cdots + z_d}{\sqrt{d}}\]
as $d\to\infty$ through the central limit theorem. In view of \eqref{eq:dichotomy} and \eqref{eq:hahnbanach}, we can therefore obtain the following general result. Note that Theorem~\ref{thm:331} corresponds to the particular case $q=\infty$, since $\Gamma(3/2) = \sqrt{\pi}/2$. 
\begin{theorem}\label{thm:general} 
	Let $2 \leq p \leq q \leq \infty$ and set $q^{-1}+r^{-1}=1$. If
	\[{\Gamma\left(1+\frac{p}{2}\right)}^\frac{1}{p} {\Gamma\left(1+\frac{r}{2}\right)}^\frac{1}{r}>1,\]
	then the Riesz projection is unbounded from $L^q(\mathbb{T}^\infty)$ to $L^p(\mathbb{T}^\infty)$. 
\end{theorem}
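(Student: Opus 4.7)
The natural plan is to combine the duality formula \eqref{eq:hahnbanach} with the sharp Khintchine estimates \eqref{eq:khintchine} and Theorem~\ref{thm:khintchinedual}, testing everything on the normalised linear functions
\[\varphi_d(z) = \frac{z_1+z_2+\cdots+z_d}{\sqrt d},\]
which satisfy $\|\varphi_d\|_{H^2(\mathbb{T}^\infty)}=1$. The goal is to establish the strict inequality $\|\varphi_d\|_{H^p(\mathbb{T}^\infty)} > \|\varphi_d\|_{(H^r(\mathbb{T}^\infty))^\ast}$ for all sufficiently large $d$. Once this is achieved, \eqref{eq:hahnbanach} produces $\psi\in L^q(\mathbb{T}^\infty)$ with $P\psi=\varphi_d$ and $\|\psi\|_{L^q(\mathbb{T}^\infty)} < \|\varphi_d\|_{H^p(\mathbb{T}^\infty)} = \|P\psi\|_{L^p(\mathbb{T}^\infty)}$, so $\|P\|_{q,p}>1$ and the dichotomy \eqref{eq:dichotomy} upgrades this to $\|P\|_{q,p}=\infty$.

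To extract the required strict inequality from the hypothesis, I would pass to the limit $d\to\infty$. Since $p\geq 2$, the sharp upper constant in \eqref{eq:khintchine} is $b_p=\Gamma(1+p/2)^{1/p}$, and the central limit theorem alluded to just before Theorem~\ref{thm:general} yields $\|\varphi_d\|_{H^p(\mathbb{T}^\infty)} \to \Gamma(1+p/2)^{1/p}$; the higher-moment bound furnished by \eqref{eq:khintchine} provides the uniform integrability needed to upgrade the convergence in distribution of $\varphi_d$ to a standard complex Gaussian into convergence of $L^p$-norms. Since $r\leq 2$, the sharp upper constant in Theorem~\ref{thm:khintchinedual} is $a_r^{-1}=\Gamma(1+r/2)^{-1/r}$; testing the self-dual choice $f=\varphi_d$ in the supremum on the right-hand side of \eqref{eq:hahnbanach} gives
\[\|\varphi_d\|_{(H^r(\mathbb{T}^\infty))^\ast} \geq \frac{1}{\|\varphi_d\|_{H^r(\mathbb{T}^\infty)}} \longrightarrow \frac{1}{\Gamma(1+r/2)^{1/r}},\]
again by the central limit theorem, and this matches the upper bound supplied by Theorem~\ref{thm:khintchinedual}. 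Hence
\[\frac{\|\varphi_d\|_{H^p(\mathbb{T}^\infty)}}{\|\varphi_d\|_{(H^r(\mathbb{T}^\infty))^\ast}} \longrightarrow \Gamma\!\left(1+\tfrac{p}{2}\right)^{\!1/p}\Gamma\!\left(1+\tfrac{r}{2}\right)^{\!1/r},\]
so the hypothesis guarantees the ratio eventually exceeds~$1$.

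The endpoint $q=\infty$, i.e.\ $r=1$, is covered since \eqref{eq:hahnbanach} is stated for the full range $1\leq r<\infty$, and specialising to $r=1$ with $\Gamma(3/2)=\sqrt{\pi}/2$ recovers Theorem~\ref{thm:331}. I anticipate no substantial obstacle in carrying out this plan: the technical heart of the argument lies in Theorem~\ref{thm:khintchinedual} and in the sharpness of its constant, both of which are to be established separately. The only step requiring any genuine care is converting the one-sided bound from Theorem~\ref{thm:khintchinedual} into an actual limit for $\|\varphi_d\|_{(H^r)^\ast}$, but the self-dual test above achieves this in essentially one line.
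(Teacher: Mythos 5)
Your argument is correct and follows the paper's overall strategy---duality via \eqref{eq:hahnbanach} tested on $\varphi_d$, the central limit theorem \eqref{eq:clt}, and the dichotomy \eqref{eq:dichotomy}---but it substitutes a different estimate at the one genuinely nontrivial point, namely the control of $\|\varphi_d\|_{(H^r(\mathbb{T}^\infty))^\ast}$. You bound this quantity above by $a_r^{-1}=\Gamma(1+r/2)^{-1/r}$ using the upper half of Theorem~\ref{thm:khintchinedual}; since $\|\varphi_d\|_{H^p(\mathbb{T}^\infty)}\to\Gamma(1+p/2)^{1/p}>a_r^{-1}$ under the hypothesis, this already finishes the proof (your self-dual squeeze, which identifies the exact limit of the dual norm, is a pleasant extra but not actually needed). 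The paper instead invokes Lemma~\ref{lem:dualinverse}, which gives the exact identity $\|\varphi_d\|_{(H^r(\mathbb{T}^\infty))^\ast}=\|\varphi_d\|_{H^r(\mathbb{T}^\infty)}^{-1}$ for every finite $d$ by an elementary cyclic-shift averaging argument resting only on Lemma~\ref{lem:projection} and the triangle inequality. The trade-off is that your route imports the sharp Khintchine constant $a_r$ from \eqref{eq:khintchine}, a nontrivial external input from \cite{KK01}, whereas the paper's route is self-contained at this point and yields exact finite-$d$ information that is reused in Section~\ref{sec:minimal}. There is no circularity in your version: the upper bound in Theorem~\ref{thm:khintchinedual} is proven independently of Lemma~\ref{lem:dualinverse} (only the optimality of its constants relies on that lemma, and you do not use the optimality). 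The remaining steps---the strictness bookkeeping with the infimum in \eqref{eq:hahnbanach}, the identifications $b_p=\Gamma(1+p/2)^{1/p}$ for $p\geq2$ and $a_r=\Gamma(1+r/2)^{1/r}$ for $1\leq r\leq2$, and the endpoint $r=1$---all check out.
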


\begin{remark}
	Theorem~\ref{thm:general} is an improvement on the same statement with requirement $p/2 \cdot r/2 >1$, which can be deduced from a one-variable example found in \cite[Sec.~4]{BOCSZ}. Here is an alternative example to that of \cite{BOCSZ} obtained by our approach using the Hahn--Banach theorem. For $w \in \mathbb{D}$, the functional of point evaluation $f \mapsto f(w)$ has norm $(1-|w|^2)^{-1/r}$ on $H^r(\mathbb{T})$ and the analytic symbol is $\varphi_w(z) = (1-\overline{w}z)^{-1}$. Hence, if $w=\varepsilon>0$ then $\|\varphi_\varepsilon\|_{(H^r(\mathbb{T}))^\ast} = 1 + r^{-1} \varepsilon^2 + O(\varepsilon^4)$ as $\varepsilon\to0$. Furthermore, 
	\[\|\varphi_\varepsilon\|_{H^p(\mathbb{T})} = \big\|(1-\varepsilon z)^{-p/2}\big\|_{H^2(\mathbb{T})}^{2/p} = 1 + \frac{p}{4} \varepsilon^2 + O(\varepsilon^4),\]
	so we obtain the desired counter-example as soon as $r^{-1}>p/4$ in view of \eqref{eq:dichotomy}. The optimal $\psi_w$ in $L^q(\mathbb{T})$ for this functional can be found in \cite[Thm.~6.1]{CG86}, and we note that it is similar (but not equal to) the counter-example constructed in \cite{BOCSZ}.
\end{remark}

The present paper is organised into two additional sections. In Section~\ref{sec:linear} we prove Theorem~\ref{thm:khintchinedual} and Theorem~\ref{thm:general}. Section~\ref{sec:minimal} is devoted to constructing the element $\psi$ in $L^q(\mathbb{T}^\infty)$ for $1<q\leq \infty$ of minimal norm such that $P \psi(z) = z_1+z_2+\cdots+z_d$, thereby realising the infimum \eqref{eq:hahnbanach} in this special case, which is of particular interest due to the crucial role it plays in the proof of Theorem~\ref{thm:khintchinedual} and Theorem~\ref{thm:general}.

\section{Linear functions on $\mathbb{T}^\infty$} \label{sec:linear} 
In preparation for the proof of Theorem~\ref{thm:khintchinedual} and Theorem~\ref{thm:general}, let us recall some basic facts about linear functions and projections on $\mathbb{T}^\infty$. The projection $A_d$ obtained by formally setting $z_j = 0$ for $j>d$ has the representation
\[A_d f(z_1,z_2,\ldots) = \int_{\mathbb{T}^\infty} f(z_1,z_2,\ldots,z_d,z_{d+1},z_{d+2},\ldots) \,d\mu_\infty(z_{d+1},z_{d+2},\ldots).\]
Since $A_d f$ is a function the first $d$ variables, we take $L^p$ norm with respect to these variables and use the triangle inequality to obtain 
\begin{equation}\label{eq:Ad} 
	\|A_d f\|_{L^p(\mathbb{T}^\infty)} \leq \|f\|_{L^p(\mathbb{T}^\infty)}. 
\end{equation}
Let $k\in\mathbb{Z}$. We say that $f$ is $k$-homogeneous if
\[f(e^{i\theta}z_1,e^{i\theta} z_2,e^{i\theta}z_3,\ldots) = e^{ki\theta} f(z_1,z_2,z_3,\ldots).\]
Clearly every $f$ in $L^p(\mathbb{T}^\infty)$ can be decomposed in $k$-homogeneous parts, say 
\begin{equation}\label{eq:homocomp} 
	f(z) = \sum_{k\in\mathbb{Z}} f_k(z), 
\end{equation}
where $f_k$ is $k$-homogeneous. The following simple lemma is well-known, but we include a short proof for the readers convenience. 
\begin{lemma}\label{lem:projection} 
	Let $1\leq p \leq \infty$ and suppose that $f$ in $L^p(\mathbb{T}^\infty)$ is decomposed as in \eqref{eq:homocomp}. Then $\|f_k\|_{L^p(\mathbb{T}^\infty)} \leq \|f\|_{L^p(\mathbb{T}^\infty)}$ for every $k\in\mathbb{Z}$. 
\end{lemma}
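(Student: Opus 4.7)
The plan is to extract $f_k$ from $f$ by integrating against the character $e^{-ki\theta}$ along the diagonal rotation $z \mapsto (e^{i\theta}z_1, e^{i\theta}z_2, \ldots)$, and then to apply Minkowski's integral inequality together with the translation invariance of the Haar measure $\mu_\infty$.

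Concretely, I would first observe that on the Fourier side, writing $f(z) = \sum_{\alpha \in \mathbb{Z}_0^\infty} c_\alpha z^\alpha$, the rotated function satisfies $f(e^{i\theta}z) = \sum_\alpha c_\alpha e^{i\theta|\alpha|} z^\alpha$ with $|\alpha| = \sum_j \alpha_j$. Multiplying by $e^{-ki\theta}$ and integrating in $\theta$ kills every term except those with $|\alpha|=k$, giving the identity
\[
f_k(z) = \int_0^{2\pi} e^{-ki\theta} f(e^{i\theta}z_1, e^{i\theta}z_2, \ldots) \,\frac{d\theta}{2\pi}.
\]
(For rigour in the infinite-variable setting, one can first verify this on trigonometric polynomials, where \eqref{eq:homocomp} is a finite sum, and then extend by density; alternatively, both sides are seen to have the same Fourier coefficients, hence agree in $L^p$.)

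Next, for $1 \leq p < \infty$, I would apply Minkowski's integral inequality to the above representation, obtaining
\[
\|f_k\|_{L^p(\mathbb{T}^\infty)} \leq \int_0^{2\pi} \bigl\| e^{-ki\theta} f(e^{i\theta}z_1, e^{i\theta}z_2, \ldots) \bigr\|_{L^p(\mathbb{T}^\infty)} \,\frac{d\theta}{2\pi}.
\]
Since $\mu_\infty$ is invariant under the diagonal rotation $z_j \mapsto e^{i\theta}z_j$ and $|e^{-ki\theta}|=1$, each integrand equals $\|f\|_{L^p(\mathbb{T}^\infty)}$, and the conclusion follows. For $p=\infty$ the argument is even simpler: $|e^{-ki\theta}f(e^{i\theta}z)| \leq \|f\|_{L^\infty(\mathbb{T}^\infty)}$ pointwise, so averaging preserves this bound.

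There is no real obstacle here; the only point requiring a small amount of care is justifying the integral representation of $f_k$ in the infinite-variable setting, which can be handled either by first reducing to trigonometric polynomials via \eqref{eq:Ad} and standard density arguments, or by comparing Fourier coefficients on both sides. Everything else is a routine application of Minkowski's inequality and the translation invariance of $\mu_\infty$.
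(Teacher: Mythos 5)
Your proof is correct and takes essentially the same route as the paper: the same integral representation of $f_k$ by averaging $f(e^{i\theta}z)e^{-ki\theta}$ over the diagonal rotation, followed by Minkowski's integral inequality (which the paper phrases as the triangle inequality plus an interchange of the order of integration) and the rotation invariance of $\mu_\infty$. Your explicit handling of the case $p=\infty$ is a minor addition that the paper leaves implicit.
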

\begin{proof}
	By the decomposition \eqref{eq:homocomp}, we find that
	\[f_k(z) = \int_{-\pi}^\pi f(e^{i\theta}z_1,e^{i\theta} z_2,e^{i\theta}z_3,\ldots) \,e^{-ki\theta}\,\frac{d\theta}{2\pi}.\]
	By the triangle inequality and interchanging the order of integration, we obtain
	\[\|f_k\|_{L^p(\mathbb{T}^\infty)}^p \leq \int_{-\pi}^\pi \int_{\mathbb{T}^\infty} \left|f(e^{i\theta}z_1,e^{i\theta} z_2,e^{i\theta}z_3,\ldots)\right|^p \,d\mu_\infty(z)\, \frac{d\theta}{2\pi} = \|f\|_{L^p(\mathbb{T}^\infty)}^p,\]
	since for each $\theta$ the rotation $z_j \mapsto e^{i\theta}z_j$ does not change the $L^p(\mathbb{T}^\infty)$ norm of $f$. 
\end{proof}

Let $\operatorname{Lin}(\mathbb{T}^\infty)$ denote the space of linear functions \eqref{eq:linearfunction}. Lemma~\ref{lem:projection} states that the projection from $H^p(\mathbb{T}^\infty)$ to $\operatorname{Lin}(\mathbb{T}^\infty) \cap H^p(\mathbb{T}^\infty)$ is contractive. This fact is crucial to the proof of Theorem~\ref{thm:khintchinedual} and Theorem~\ref{thm:general} since it allows us to compute the $(H^p(\mathbb{T}^\infty))^\ast$ norm of a linear function $\varphi$ by testing only against functions $f$ from $\operatorname{Lin}(\mathbb{T}^\infty) \cap H^p(\mathbb{T}^\infty)$. 

In view of Khintchine's inequality \eqref{eq:khintchine}, the space $\operatorname{Lin}(\mathbb{T}^\infty) \cap H^p(\mathbb{T}^\infty)$ consists of linear functions \eqref{eq:linearfunction} with square summable coefficients for each $1\leq p < \infty$, although the norms are generally different. 

Armed with these preliminaries, we will now obtain the key new ingredient needed in the proofs of Theorem~\ref{thm:khintchinedual} and Theorem~\ref{thm:general}.
\begin{lemma}\label{lem:dualinverse} 
	Let $1\leq p< \infty$ and set $\varphi_d(z) = (z_1+\cdots+z_d)/\sqrt{d}$. Then
	\[\|\varphi_d\|_{(H^p(\mathbb{T}^\infty))^\ast} = \|\varphi_d\|_{H^p(\mathbb{T}^\infty)}^{-1}.\]
\end{lemma}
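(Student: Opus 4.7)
The plan is to establish the two bounds $\|\varphi_d\|_{(H^p(\mathbb{T}^\infty))^\ast} \geq \|\varphi_d\|_{H^p(\mathbb{T}^\infty)}^{-1}$ and its reverse separately. The lower bound is immediate from testing the dual expression \eqref{eq:hahnbanach} against $f = \varphi_d$ itself, since $\langle \varphi_d, \varphi_d\rangle_{L^2(\mathbb{T}^\infty)} = \|\varphi_d\|_{H^2(\mathbb{T}^\infty)}^2 = 1$.

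For the matching upper bound, I will first apply Lemma~\ref{lem:projection} to reduce the supremum over $f \in H^p(\mathbb{T}^\infty)$ in \eqref{eq:hahnbanach} to a supremum over linear functions $f = \sum_{j\geq 1} c_j z_j$. This reduction is legitimate because $\varphi_d$ is $1$-homogeneous, so $\langle f, \varphi_d\rangle$ depends only on the $1$-homogeneous component of $f$, while by Lemma~\ref{lem:projection} that component has no larger $H^p$ norm. Two further reductions simplify matters: using $A_d$ from \eqref{eq:Ad}, I may assume $c_j = 0$ for $j > d$, since the pairing $\langle f, \varphi_d\rangle = d^{-1/2}\sum_{j=1}^d c_j$ only depends on the first $d$ coefficients while $\|A_d f\|_{H^p} \leq \|f\|_{H^p}$; and rotating each variable $z_j \mapsto e^{i\theta_j}z_j$ preserves the $H^p$ norm while letting me assume all $c_j \geq 0$.

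The crux is then a symmetrization argument on $\mathbb{R}^d$. Writing $N(c_1, \ldots, c_d) := \|\sum_{j=1}^d c_j z_j\|_{H^p(\mathbb{T}^\infty)}$, this is a permutation-symmetric norm because permuting the coordinates of $\mathbb{T}^\infty$ preserves every $L^p$ norm. Setting $\bar c = d^{-1}\sum_{j=1}^d c_j$ and averaging over the symmetric group $S_d$, the triangle inequality yields
\[N(\bar c, \ldots, \bar c) = N\left(\frac{1}{d!}\sum_{\sigma \in S_d}(c_{\sigma(1)}, \ldots, c_{\sigma(d)})\right) \leq \frac{1}{d!}\sum_{\sigma \in S_d} N(c_{\sigma(1)}, \ldots, c_{\sigma(d)}) = N(c_1, \ldots, c_d).\]
Since $N(1, \ldots, 1) = \sqrt{d}\,\|\varphi_d\|_{H^p(\mathbb{T}^\infty)}$, the left-hand side equals $\bar c \sqrt{d}\,\|\varphi_d\|_{H^p(\mathbb{T}^\infty)}$, and rearranging produces $|\langle f, \varphi_d\rangle|/\|f\|_{H^p(\mathbb{T}^\infty)} = \sqrt{d}\,\bar c/N(c_1, \ldots, c_d) \leq \|\varphi_d\|_{H^p(\mathbb{T}^\infty)}^{-1}$, as required.

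I do not foresee a serious obstacle. The main conceptual point is recognising that Lemma~\ref{lem:projection} permits the reduction to linear test functions; once that is done, the permutation symmetry of the $H^p$ norm on $\mathbb{T}^\infty$ and a one-line triangle-inequality argument complete the proof. The same mechanism should then feed directly into the proof of Theorem~\ref{thm:khintchinedual}, where this lemma applied to $\varphi_d$ with $d \to \infty$ will supply the sharp lower constant via the central limit theorem.
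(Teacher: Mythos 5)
Your proof is correct and follows essentially the same route as the paper: the lower bound by testing against $\varphi_d$ itself, and the upper bound by reducing to linear functions of $d$ variables with nonnegative coefficients and then symmetrizing via the triangle inequality. The only (cosmetic) difference is that you average over the full symmetric group $S_d$ where the paper averages over the $d$ cyclic shifts of the coefficient sequence; both averages produce $\lambda\varphi_d$ and yield the identical bound.
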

\begin{proof}
	For the lower bound, we simply note that since $\varphi_d$ is in $H^p(\mathbb{T}^\infty)$ we obtain
	\begin{equation}\label{eq:lowerbound} 
		\|\varphi_d\|_{(H^p(\mathbb{T}^\infty))^\ast} = \sup_{f \in H^p(\mathbb{T}^\infty)} \frac{|\langle f, \varphi_d \rangle_{H^2(\mathbb{T}^\infty)}|}{{\|f\|_{H^p(\mathbb{T}^\infty)}}} \geq \frac{\|\varphi_d\|_{H^2(\mathbb{T}^\infty)}^2}{\|\varphi_d\|_{H^p(\mathbb{T}^\infty)}} = \|\varphi_d\|_{H^p(\mathbb{T}^\infty)}^{-1}. 
	\end{equation}
	For the upper bound, we first use \eqref{eq:Ad} and Lemma~\ref{lem:projection} to the effect that 
	\begin{equation}\label{eq:upperdual} 
		\|\varphi_d\|_{(H^p(\mathbb{T}^\infty))^\ast} = \sup_{f \in H^p(\mathbb{T}^\infty)} \frac{|\langle f, \varphi_d \rangle_{H^2(\mathbb{T}^\infty)}|}{{\|f\|_{H^p(\mathbb{T}^\infty)}}} = \sup_{f \in \operatorname{Lin}(\mathbb{T}^d)} \frac{|\langle f, \varphi_d \rangle_{H^2(\mathbb{T}^\infty)}|}{{\|f\|_{H^p(\mathbb{T}^\infty)}}}. 
	\end{equation}
	Any non-trivial element $f$ in $\operatorname{Lin}(\mathbb{T}^d)$ is of the form
	\[f(z) = \sum_{j=1}^d c_j z_j\]
	with at least one non-zero coefficient. Define 
	\begin{equation}\label{eq:lambda} 
		\lambda = \langle f, \varphi_d\rangle_{H^2(\mathbb{T}^\infty)} = \frac{c_1+\cdots+c_d}{\sqrt{d}}.
	\end{equation}
	After rotating each of the variables if necessary, we may assume that $c_j\geq0$ for $1\leq j \leq d$ so that $\lambda>0$ whenever $f$ is a non-trivial element in $\operatorname{Lin}(\mathbb{T}^d)$.
	
	For $1\leq k \leq d$, let $f_k$ denote the polynomial obtained by replacing the coefficient sequence $(c_1,\ldots, c_d)$ of $f$ with the shifted sequence
	\[(c_k,c_{k+1},\ldots,c_d,c_1,\ldots,c_{k-1}).\]
	By symmetry, we find that $\|f_k\|_{H^p(\mathbb{T}^\infty)} = \|f\|_{H^p(\mathbb{T}^\infty)}$. Note also that
	\[\frac{1}{d}\sum_{k=1}^d f_k(z) = \frac{c_1+\cdots+c_d}{d} \sum_{j=1}^d z_j = \lambda \varphi_d(z).\]
	The triangle inequality therefore allows us to conclude that 
	\begin{equation}\label{eq:triangle} 
		\lambda\|\varphi_d\|_{H^p(\mathbb{T}^\infty)} \leq \frac{1}{d} \sum_{k=1}^d \|f_k\|_{H^p(\mathbb{T}^\infty)} = \|f\|_{H^p(\mathbb{T}^\infty)}. 
	\end{equation}
	Using \eqref{eq:upperdual} with \eqref{eq:lambda} and \eqref{eq:triangle}, we obtain the upper bound
	\[\|\varphi_d\|_{(H^p(\mathbb{T}^\infty))^\ast} = \sup_{f \in H^p(\mathbb{T}^\infty)} \frac{|\langle f, \varphi_d \rangle_{H^2(\mathbb{T}^\infty)}|}{{\|f\|_{H^p(\mathbb{T}^\infty)}}} \leq \frac{\lambda}{\lambda \|\varphi_{d}\|_{H^p(\mathbb{T}^\infty)}} = \|\varphi_{d}\|_{H^p(\mathbb{T}^\infty)}^{-1}\]
	which, when combined with the lower bound \eqref{eq:lowerbound}, completes the proof. 
\end{proof}

Another viewpoint is to consider $(z_j)_{j\geq1}$ a sequence of independently distributed random variables on the torus and $f(z) = \sum_{j\geq1} c_j z_j$ as a weighted random walk in the plane. The norms $\|f\|_{H^p(\mathbb{T}^\infty)}$ can now be interpreted as moments of this random walk. A simple computation (see Section~\ref{sec:minimal}) gives that $\|z_1+z_2\|_{H^1(\mathbb{T}^\infty)} = 4/\pi$ and it is demonstrated in \cite{BNSW11} that
\[\|z_1+z_2+z_3\|_{H^1(\mathbb{T}^\infty)} = \frac{3}{16}\frac{2^{1/3}}{\pi^4} \Gamma^6\left(\frac{1}{3}\right)+\frac{27}{4}\frac{2^{2/3}}{\pi^4} \Gamma^6\left(\frac{2}{3}\right) = 1.57459\ldots\]
In general it is difficult to compute $\|f\|_{H^p(\mathbb{T}^\infty)}$ even for simple linear polynomials $f$ (when $p$ is not an even integer). However, the central limit theorem gives that 
\begin{equation}\label{eq:clt} 
	\lim_{d\to\infty}\left\|\frac{z_1+z_2+\cdots+z_d}{\sqrt{d}}\right\|_{H^p(\mathbb{T}^\infty)}^p = \int_{\mathbb{C}} |Z|^p e^{-|Z|^2}\, \frac{dZ}{\pi} = \Gamma\left(1+\frac{p}{2}\right), 
\end{equation}
since $(z_1+z_2+\cdots+z_d)/\sqrt{d}$ has a limiting complex normal distribution.

We are now ready to prove Theorem~\ref{thm:khintchinedual}. To conform with the notations of the present section and to make the proof clearer, we consider now $\varphi$ in $(H^p(\mathbb{T}^\infty))^\ast$ and $f$ in $H^p(\mathbb{T}^\infty)$, so $\varphi$ plays the role of $f$ in the statement of the theorem. 
\begin{proof}
	[Proof of Theorem~\ref{thm:khintchinedual}] Let $\varphi$ be a linear function in $(H^p(\mathbb{T}^\infty))^\ast$. By Lemma~\ref{lem:projection}, the Cauchy--Schwarz inequality and Khintchine's inequality \eqref{eq:khintchine}, we find that
	\begin{multline*}
		\|\varphi\|_{(H^p(\mathbb{T}^\infty))^\ast} = \sup_{f \in \operatorname{Lin}(\mathbb{T}^\infty)} \frac{|\langle f, \varphi \rangle_{H^2(\mathbb{T}^\infty)}|}{\|f\|_{H^p(\mathbb{T}^\infty)}} \\ \leq \sup_{f \in \operatorname{Lin}(\mathbb{T}^\infty)} \frac{\|f\|_{H^2(\mathbb{T}^\infty)}\|\varphi\|_{H^2(\mathbb{T}^\infty)}}{\|f\|_{H^p(\mathbb{T}^\infty)}} \leq \frac{{\|\varphi\|_{H^2(\mathbb{T}^\infty)}}}{a_p}.
	\end{multline*}
	Conversely, Khintchine's inequality \eqref{eq:khintchine} also gives that
	\[\|\varphi\|_{(H^p(\mathbb{T}^\infty))^\ast} = \sup_{f \in H^p(\mathbb{T}^\infty)} \frac{|\langle f, \varphi \rangle_{H^2(\mathbb{T}^\infty)}|}{\|f\|_{H^p(\mathbb{T}^\infty)}} \geq \frac{\|\varphi\|_{H^2(\mathbb{T}^\infty)}^2}{\|\varphi\|_{H^p(\mathbb{T}^\infty)}} \geq \frac{\|\varphi\|_{H^2(\mathbb{T}^\infty)}}{b_p},\]
	since $\varphi$ is in $H^p(\mathbb{T}^\infty)$. To prove optimality of the constants, we appeal to Lemma~\ref{lem:dualinverse} and consider $\varphi_d(z) = (z_1+\cdots+z_d)/\sqrt{d}$ for $d=1$ and as $d\to\infty$.
\end{proof}

Theorem~\ref{thm:general} also follows easily from Lemma~\ref{lem:dualinverse} and \eqref{eq:clt}. 
\begin{proof}
	[Proof of Theorem~\ref{thm:general}] Let $2 \leq p \leq q \leq \infty$ and set $q^{-1}+r^{-1}=1$. Suppose that 
	\begin{equation}\label{eq:gammagamma} 
		{\Gamma\left(1+\frac{p}{2}\right)}^\frac{1}{p} {\Gamma\left(1+\frac{r}{2}\right)}^\frac{1}{r}>1. 
	\end{equation}
	We want to to prove that the Riesz projection is unbounded from $L^q(\mathbb{T}^\infty)$ to $L^p(\mathbb{T}^\infty)$. In view of \eqref{eq:dichotomy}, it is sufficient to find $\psi$ in $L^q(\mathbb{T}^\infty)$ such that
	\[\frac{\|P\psi\|_{L^p(\mathbb{T}^\infty)}}{\|\psi\|_{L^q(\mathbb{T}^\infty)}}>1.\]
	We pick $\psi_d$ in $L^q(\mathbb{T}^\infty)$ of minimal norm such that $P\psi_d = \varphi_d$, where $\varphi_d$ denotes the function from Lemma~\ref{lem:dualinverse}. By \eqref{eq:hahnbanach} and Lemma~\ref{lem:dualinverse}, we obtain
	\[\frac{\|P\psi_d\|_{L^p(\mathbb{T}^\infty)}}{\|\psi_d\|_{L^q(\mathbb{T}^\infty)}} = \|\varphi_d\|_{L^p(\mathbb{T}^\infty)}\|\varphi_d\|_{L^r(\mathbb{T}^\infty)}.\]
	By \eqref{eq:clt} and our assumption \eqref{eq:gammagamma}, the right hand side is strictly larger than $1$ for some sufficiently large $d$. 
\end{proof}

\section{Minimal $L^q(\mathbb{T}^\infty)$ norm} \label{sec:minimal} 
We will now solve the following problem: For $1<q\leq \infty$, find the element $\psi$ in $L^q(\mathbb{T}^\infty)$ of minimal norm such that
\[P\psi(z) = z_1+z_2+\cdots+z_d = \varphi(z).\]
The strict convexity of $L^q(\mathbb{T}^\infty)$ when $1<q<\infty$ means that the minimizer is unique. Uniqueness of the minimizer holds also for $q=\infty$, but in this case it is a consequence of the continuity of $\varphi$ on the polytorus (see e.g.~\cite[Sec.~8.2]{Duren}). 

In view of \eqref{eq:hahnbanach} and (the proof of) Lemma~\ref{lem:dualinverse}, we know that $\psi$ satisfies
\begin{equation} \label{eq:holder}
	\|\psi\|_{L^q(\mathbb{T}^\infty)} = \frac{\langle \varphi, \psi \rangle_{L^2(\mathbb{T}^\infty)}}{\|\varphi\|_{L^p(\mathbb{T}^\infty)}} = \frac{d}{\|\varphi\|_{L^p(\mathbb{T}^\infty)}}
\end{equation}
with $p^{-1}+q^{-1}=1$. On the left hand side of \eqref{eq:holder} we have attained equality in H\"older's inequality, which implies that $|\psi| = C |\varphi|^{p-1}$ almost everywhere. Inserting this into the norm expression $\|\psi\|_{L^q(\mathbb{T}^\infty)}$ in \eqref{eq:holder} and using that $(p-1)q=p$, we find that $C = d \|\varphi\|_{L^p(\mathbb{T}^\infty)}^{-p}$. From H\"older's inequality and \eqref{eq:holder} we also see that
\[\langle |\varphi|,|\psi| \rangle_{L^2(\mathbb{T}^\infty)} \leq \|\varphi\|_{L^p(\mathbb{T}^\infty)}\|\psi\|_{L^q(\mathbb{T}^\infty)} = \langle \varphi, \psi \rangle_{L^2(\mathbb{T}^\infty)},\]
which is only possible if $\varphi \overline{\psi}\geq0$ almost everywhere. Combining these observations yields that
\[\psi(z) = \frac{d}{\|\varphi\|_{L^p(\mathbb{T}^\infty)}^p} |\varphi(z)|^{p-2} \varphi(z)\]
is the element in $L^q(\mathbb{T}^\infty)$ of minimal norm such that $P \psi(z) = z_1+z_2+\cdots+z_d = \varphi(z)$ for $1<p\leq \infty$ and $p^{-1}+q^{-1}=1$. Note that $\psi$ is $1$-homogeneous, which we knew in advance by Lemma~\ref{lem:projection}. We can also directly verify that
\[\int_{\mathbb{T}^\infty} \psi(z) \,\overline{z_j}\,dm_\infty(z) = \int_{\mathbb{T}^\infty} \psi(z) \,\frac{\overline{z_1}+\overline{z_2}+\cdots+\overline{z_d}}{d}\,d\mu_\infty(z)=1,\]
since $\psi$ inherits the symmetry of $\varphi$.

When $d=2$, we can actually compute the Fourier series explicitly. We begin by using the trick $z_1+z_2 = z_2(1+z_1\overline{z_2})$ to write $\psi(z) = z_2 \Psi(z_1\overline{z_2})$, where
\[\Psi(z) = \frac{2}{\|1+z\|_{L^p(\mathbb{T})}^p}|1+z|^{p-2}(1+z).\]
Then we get that
\[\frac{\|1+z\|_{L^p(\mathbb{T})}^p}{2} = \frac{1}{2}\int_{-\pi}^\pi |1+e^{i\theta}|^p \,\frac{d\theta}{2\pi} = 2^{p-1} \int_{-\pi}^\pi \cos^p\left(\frac{\theta}{2}\right) \frac{d\theta}{2\pi} = \frac{2^p}{\pi} \int_{0}^{\pi/2} \cos^p(\vartheta)\,d\vartheta.\]
Similarly, we compute: 
\begin{align*}
	\int_{-\pi}^\pi |1+e^{i\theta}|^{p-2}(1+e^{i\theta})\, e^{-ik\theta}\,\frac{d\theta}{2\pi} &= 2^{p-1} \int_{-\pi}^\pi \cos^{p-1}\left(\frac{\theta}{2}\right) \, e^{-i(k-1/2)\theta}\,\frac{d\theta}{2\pi} \\
	&= 2^{p-1} \int_{-\pi/2}^{\pi/2} \cos^{p-1}(\vartheta) \, e^{-i(2k-1)\vartheta}\,\frac{d\vartheta}{\pi} \\
	&= \frac{2^p}{\pi} \int_0^{\pi/2} \cos^{p-1}(\vartheta) \cos((1-2k)\vartheta)\,d\vartheta 
\end{align*}
The latter integral, which contains the former as the special case $k=0,1$ is known (see e.g. \cite[p.~399]{GR}) and we obtain that
\[\int_{-\pi}^\pi |1+e^{i\theta}|^{p-2}(1+e^{i\theta})\, e^{-ik\theta}\,\frac{d\theta}{2\pi} = \frac{1}{p \operatorname{Beta}\left(\frac{p+1-2k+1}{2},\frac{p-1+2k+1}{2}\right)}\]
for $\operatorname{Beta}(x,y) = \Gamma(x)\Gamma(y)/\Gamma(x+y)$. Combining everything, we find that
\[\psi(e^{i\theta_1},e^{i\theta_2}) = \sum_{k\in \mathbb{Z}} \frac{\Gamma(1+p/2)\Gamma(p/2)}{\Gamma(1+p/2-k)\Gamma(p/2+k)}\, e^{ik\theta_1} e^{i(1-k)\theta_2}.\]

\section*{Acknowledgements} The author would like to extend his gratitude to A.~Bondarenko, H.~Hedenmalm, E.~Saksman and K.~Seip for an interesting discussion which culminated in the material presented in Section~\ref{sec:minimal} and to the referee for a helpful suggestion.

\bibliographystyle{amsplain} 
\bibliography{linear} 

\providecommand{\bysame}{\leavevmode\hbox to3em{\hrulefill}\thinspace}
\providecommand{\MR}{\relax\ifhmode\unskip\space\fi MR }
\providecommand{\MRhref}[2]{%
  \href{http://www.ams.org/mathscinet-getitem?mr=#1}{#2}
}
\providecommand{\href}[2]{#2}
\begin{thebibliography}{1}

\bibitem{BNSW11}
J.~M. Borwein, D.~Nuyens, A.~Straub, and J.~Wan, \emph{Some arithmetic
  properties of short random walk integrals}, Ramanujan J. \textbf{26} (2011),
  no.~1, 109--132.

\bibitem{BOCSZ}
O.~F. Brevig, J.~Ortega-Cerd\`a, K.~Seip, and J.~Zhao, \emph{Contractive
  inequalities for {H}ardy spaces}, Funct. Approx. Comment. Math. \textbf{59}
  (2018), no.~1, 41--56.

\bibitem{BP15}
O.~F. Brevig and K.-M. Perfekt, \emph{Failure of {N}ehari's theorem for
  multiplicative {H}ankel forms in {S}chatten classes}, Studia Math.
  \textbf{228} (2015), no.~2, 101--108.

\bibitem{CG86}
B.~J. Cole and T.~W. Gamelin, \emph{Representing measures and {H}ardy spaces
  for the infinite polydisk algebra}, Proc. London Math. Soc. (3) \textbf{53}
  (1986), no.~1, 112--142.

\bibitem{Duren}
P.~L. Duren, \emph{Theory of {$H^{p}$} spaces}, Pure and Applied Mathematics,
  Vol. 38, Academic Press, New York-London, 1970.

\bibitem{GR}
I.~S. Gradshteyn and I.~M. Ryzhik, \emph{Table of integrals, series, and
  products}, eighth ed., Elsevier/Academic Press, Amsterdam, 2015.

\bibitem{KK01}
H.~K\"onig and S.~Kwapie\'n, \emph{Best {K}hintchine type inequalities for sums
  of independent, rotationally invariant random vectors}, Positivity \textbf{5}
  (2001), no.~2, 115--152.

\bibitem{MS11}
J.~Marzo and K.~Seip, \emph{{$L^\infty$} to {$L^p$} constants for {R}iesz
  projections}, Bull. Sci. Math. \textbf{135} (2011), no.~3, 324--331.

\bibitem{OCS12}
J.~Ortega-Cerd\`a and K.~Seip, \emph{A lower bound in {N}ehari's theorem on the
  polydisc}, J. Anal. Math. \textbf{118} (2012), no.~1, 339--342.

\end{thebibliography}
\end{document}